\def\NAT@def@citea{\def\@citea{\NAT@separator}}
\newtheorem{theorem}{Theorem}[section]
\newtheorem{definition}[theorem]{Definition}
\newtheorem{example}[theorem]{Example}
\newtheorem{remark}{Remark}
\newcommand{\numberset}{\mathbb} 
\newcommand{\R}{\numberset{R}}
\newcommand{\N}{\numberset{N}}
\newcommand{\Z}{\numberset{Z}}
\newcommand{\beautyset}{\mathcal} 
\newcommand{\M}{\beautyset{M}}
\newcommand{\T}{\beautyset{T}}
\newcommand{\F}{\beautyset{F}}
\newcommand{\J}{\mathbf{j}}
\newcommand{\V}{\beautyset{V}}
\newcommand{\del}{\mathfrak{d}}
\def\Ss{{\bf t}}
\def\i{\mathfrak i}
\def \ds{\displaystyle}
\def \vsm{\vskip 0.2 truecm}
\def \vsmm{\vskip 0.1 truecm}
\def\bel{\begin{equation}\label}
\def\eeq{\end{equation}}
\def \w{\omega}
\def \d{{\bf d}}
\def\sgn{\text{ {\rm sgn}}}
\begin{document}
 
\title{A Lie-bracket-based notion of stabilizing feedback   in optimal control}
\author{Giovanni Fusco}\address{G. Fusco, Dipartimento di Matematica,
Universit\`a di Padova\\ Via Trieste, 63, Padova  35121, Italy\\
email:\,
fusco@math.unipd.it}
\author{Monica Motta}\address{M. Motta, Dipartimento di Matematica,
Universit\`a di Padova\\ Via Trieste, 63, Padova  35121, Italy\\
email:\,
motta@math.unipd.it}
\author{Franco Rampazzo}\address{F. Rampazzo, Dipartimento di Matematica,
Universit\`a di Padova\\ Via Trieste, 63, Padova  35121, Italy\\
email:\,
rampazzo@math.unipd.it}

\maketitle
\begin{abstract}
 For a   control system   two major issues  can be considered: the stabilizability  with respect to a given target,  and the minimization   of an integral functional (while the trajectories reach this target).
	Here we consider a problem where  stabilizability or controllability are investigated together with the further aim of a `cost regulation', namely a state-dependent upper bounding  of the functional. This paper is devoted to a crucial step in the program of establishing a chain of equivalences among {\it degree-$k$ stabilizability with regulated cost},  {\it asymptotic controllability with regulated cost},  and  the existence of a {\it degree-$k$ Minimum Restraint Function} (which is a special kind of Control Lyapunov Function). Besides the presence of a cost  we allow the stabilizing `feedback'   to give rise to directions that range in the union of original directions  and the family of iterated Lie bracket of length $\leq k$.  
	In the main result  asymptotic controllability [resp. with regulated cost] is proved to be necessary for  degree-$k$ stabilizability [resp. with regulated cost].  Further steps of the above-mentioned logical chain are proved in companion papers, so that also a Lyapunov-type inverse theorem —i.e. the possibility of deriving existence of a Minimum Restraint Function from stabilizability— appears as quite likely. 
	 
\end{abstract}

\subsection{Keywords} \texttt{Asymptotic stabilizability, asymptotic controllability, discontinuous feedback laws, optimal control, Lie brackets.}

\subsection{AMS Subject classification codes}\texttt{93B05 \and 93B27  \and 93B52 \and 93D20}.

\section{Introduction}

In order to  describe the issues investigated in the present article, let us briefly recall the  classical concepts of  asymptotic controllability,   stabilizability, and  Control Lyapunov Function,  with respect to a  control system  
 \bel{sys_intro}
 \dot y  = f(y ,a),\qquad y\in\R^n, \  a\in A\subset\R^m.
  \eeq 
   The open-loop  notion of {\it asymptotic controllability} to  a target $\T \subset \R^n$ prescribes that,  for any initial condition  $x\in \R^n\backslash \T$, there exist an exit-time $0<S_y \leq+\infty$ and  a control $\alpha:[0,S_y[\to A$ such that the corresponding solution $y:[0,S_y[\to \Bbb R$ of  \eqref{sys_intro} with $y(0)=x$   verifies $y([0,S_y[)\subset \R^n\backslash \T$ and  $\lim_{s\to S_y^-}{\rm dist}(y(s),\T) = 0.$  Furthermore, the so-called {\it overshoot boundedness 
} and {\it uniform attractiveness} properties have to be satisfied: namely,  the trajectory must   remain in a bounded domain  and will reach a given neighbourhood of the target after a certain time, respectively,  in  a suitable uniform way with respect to the initial condition $x$.  

The notion of {\it stabilizability}, which  is the closed-loop counterpart of asymptotic controllability, is variously defined in literature, but essentially consists in the existence of a feedback control $\R^n\backslash \T \ni y\mapsto \alpha(y)\in A$ whose corresponding trajectories ---namely the solutions of the ordinary differential equation $ \dot y = f(y,\alpha(y))$---  approach the target, still in a suitable uniform way. Since
  $\alpha(\cdot)$   may happen to  be discontinuous \cite{Brockett,SS80,So,C10}, the notion of `solution' needs further specifications: in this paper we will adopt the kind of solution associated to {\it sample-stabilizability} \cite{CLSS,CLRS}, even though other choices might be perfectly reasonable \cite{AB}.    

Finally, the existence of a {\it Control Lyapunov Function} (CLF), namely a (suitably weak) solution $U:\R^n\backslash \T\to ]0,+\infty[$ to  the Hamilton-Jacobi type,  dissipative   partial differential inequality 
\bel{HJI}   H(y,DU)<0,\qquad\quad\,\,\,  H(y,p) := \ds \min_{a\in A}\,\, \langle p, f(y,a)\rangle, \eeq
allows one to build a stabilizing feedback by choosing a selection
$$
y\mapsto \alpha(y)\in \ds \underset{a\in A}{\text{argmin}}  \,\langle DU, f(y,a) \rangle.
$$
This can be regarded  as  the first step  of the following  important circular chain of implications: 
\bel{circle}\framebox{\text{\bf existence of a} CLF} \implies \framebox{\text{\bf  stab.}}\implies
\framebox{\text{\bf  as. contr.}} \implies \framebox{\text{\bf existence of a {\rm CLF}}},  \eeq  where $\text{\bf stab.}$ and $\text {\bf as. contr.}$ stand for `stabilizability' and `asymptotic controllability', respectively.
A vast literature has been devoted to  prove each of the above implications,   and to date the whole logical chain  in \eqref{circle} has been established  under various sets of assumptions. 
Let us focus on the implication  \bel{stacon}\framebox{\text{\bf  stab.}}\implies
\framebox{\text{\bf  as. contr.}},\eeq
whose theoretical importance lies on the  fact that it plays crucial in the so-called `inverse Lyapunov problem'. The latter consists in the aim to prove that  the existence of a CLF is also necessary  for   stabilizability. However, it is somehow more natural to prove that asymptotic controllability, rather than stabilizability, is sufficient for the existence of a CLF, so that   establishing \eqref{stacon}   proves crucial.  Actually, in the standard framework   \eqref{stacon} can be easily proved.

However, in  the  high order optimization-stabilization   problem described below,     implication \eqref{stacon} is nothing but trivial, and,  as a matter of fact, it   constitutes  the main result of this paper. 
 To illustrate it,   let us begin by briefly describing what we mean by the  {\it high order optimization-stabilization issue.}
\begin{enumerate}
	
	\item {\it The  stabilization-optimization issue.} On the one hand, concomitantly to the reachability of the target $\T$ one might  aim at minimizing    an integral cost
	$$
	  \,\,\,\,\int_0^{S_y} l(y(s),\alpha(s)) \, ds, \quad {(l\ge0)}.$$  
Accordingly, one defines  a particular CLF, called {\it Minimum Restraint Function} (MRF) as a (suitably weak) solution of the  Hamilton-Jacobi  type, dissipative  differential inequality
$$ \hat H(y,p_0, DU)<0, \qquad\quad \,\,\, \hat H(y,p_0,p) :=\ds \min_{a\in A} \,\langle p, f(y,a) + p_0\, l(y,a)\rangle,$$
	  the `cost multiplier' $p_0>0$ being suitably chosen \cite{MR,LMR}.
	By constructing a feedback law as  a selection 
	$$
	\ds  y\mapsto \hat \alpha(y)\in 
	\underset{a\in A}{\text{argmin}}  \,\langle p\cdot f(y,a) + p_0\, l(y,a)\rangle, 
	$$
	one obtains both  stabilizability and a cost bound,  a property which is  called {\it stabilizability with regulated cost} \cite{LM,LM2,LM19}.
	
	\item {\it  The higher order stabilization issue.}  On the other hand, considering  nonlinear systems with   control-linear dynamics\footnote{The possibility of considering more general systems is discussed in \cite[Sec. 6]{FMR2}.}  
	$$
	f(y,a): =f_1(y)a^1+\dots+f_m(y)a^m,
	$$
 for every $k\geq 2$  we  manage to  include   the iterated Lie brackets of the vector fields $f_1,\dots,f_m$ up to the length $k$ in the  Hamiltonians   defining the   dissipative partial differential inequalities  (both in the purely dynamical case and in the presence of  an integral cost). The obvious  idea behind this generalization can be expressed by saying that   one regards iterated Lie brackets as   hidden (higher order) dynamics, which can be suitably approximated (in an appropriate  time scale).\end{enumerate}

Aiming to a generalization which includes both issues (1) and (2), we introduce, for every natural number $k$,   degree-$k$ Hamiltonians  $H^{(k)}(y,p_0,p)$  containing the current cost  $l$ and   obtained as  arg-min-max over  Lie-brackets of length $h\leq k$ and  control values $a\in A$ (see \cite{FMR2} and  \cite{MR2,Fu,MR3}).
Under suitable  assumptions, the solutions $U: \Bbb R^n\backslash \T \to ]0,+\infty[$ to the Hamilton-Jacobi type,  dissipative  differential inequality 
\bel{HJIk} H^{(k)}(y,p_0(U),DU)<0, \eeq
(where $p_0:\Bbb R_{\geq 0}\to [0,1]$ is now  a continuous map) are called {\it degree-$k$ $p_0$-MRF functions}. In \cite{FMR2} we have investigated the possibility, once a function verifies \eqref{HJIk}, of constructing a stabilizing feedback  by an  arg-min-max argument similar to the above one. Let us point out that   at some points $y$ such a feedback  may  select an iterated  Lie bracket $B(y)$ ---whose   length $\ell_{B(y)}$ verifies  $1<\ell_{B(y)}\leq k$--- rather than the vector fields $\pm f_1,\dots,\pm f_m$. Since the  order of  the  displacements associated to a Lie bracket $B(y)$ increases with the length $\ell_{B(y)}$,  the resulting notion of stabilizability (with  regulated cost) turns out to be a bit more demanding than the usual one,\footnote{Instead, for mere stabilization without a cost, the new notion of stabilizability is equivalent to the classical one, see Theorem \ref{L_equiv}.} in that it includes  sample intervals whose amplitude depends (at each $y$) on the lenght $\ell_{B(y)}$  of the  bracket $B(y)$ selected by the associated feedback.  
As already mentioned,  the possibility of getting  the existence of a    $p_0$-MRF function by the assumption of global asymptotic controllability seems quite likely (for the case without cost, see \cite{KT04,LM3,CLSS,R1}). Hence,  it becomes  crucial to prove  implication \eqref{stacon} for our extended setting: this is actually the content of our main result  (Theorem \ref{sample-->gaccosto}).
The paper is concluded by  a result of equivalence between the considered notion of stabilizability (without a cost) and  both the standard one and  the one considered in \cite{Fu}. In particular this tells us that the extra complication of our  definition of stabilizability is only due to the aim of {\it regulating}, i.e. bounding, the cost.

\vsm
In the remaining part of this Section we introduce some general  notations and definitions. In Section \ref{section_mrf} we give precise definitions of { \it degree-$k$ feedback generator}, {\it  sampling process},   and {\it degree-$k$ sample stabilizability with regulated cost}. The main result is established in Section \ref{concludesec}. Finally, for the purely dynamical case (i.e. with no cost),  in Section \ref{sec_comparison} we establish the equivalence  among  three notions of stabilizability: the one  proposed here, the standard one \cite{CLSS}, and the one recently introduced in \cite{Fu}.

\subsection{Notation and preliminaries} \label{preliminari}
For any  $a,b\in\R$, let us set  $a\vee b:= \max\{a,b\}$, $a\wedge b:= \min\{a,b\}$. For any integer $N\ge1$ 
we set $\R_{\ge0}^N:=[0,+\infty[^N$ and $\R_{>0}^N:=]0,+\infty[^N$. For  $N=1$ we simply write $\R_{\ge0}$ and  $\R_{>0}$, respectively. 
Given a nonempty set $X\subset\R^n$, we write $\overline X$ for the closure of $X$. 
Given an  open, nonempty subset $\Omega \subseteq \R^n$ and an integer $k\geq 1$, we write $C^k(\Omega)$ for the set of vector fields of class $C^k$ on $\Omega$, namely $C^k(\Omega):=C^k(\Omega; \R^n)$, while  $C^{k}_b(\Omega) \subset C^k(\Omega)$ denotes the subset
 of vector fields with bounded derivatives up to order $k$.
We use  $C^{k-1,1}(\Omega)\subset C^{k-1}(\Omega)$ to denote the subset of vector fields whose $(k-1)$-th derivative is Lipschitz continuous on $\Omega$, and we set  and $C^{k-1,1}_b (\Omega):=C^{k-1}_b(\Omega)\cap C^{k-1,1}(\Omega) $. 


\noindent  We say that a continuous function $G:\overline{\Omega} \to\R$ is  {\em positive definite} if  $G(x)>0$ \,$\forall x\in\Omega$ and $G(x)=0$ for any $x$ belonging to the boundary of $\Omega$. The function $G$ is called {\em proper}  if the pre-image $G^{-1}({\mathcal K})$ of any compact set ${\mathcal K}\subset\R_{\geq0}$ is compact.   
For a continuous function $G: \overline{\R^n\setminus\T} \to\R$ which is positive definite and proper, let us define 
the function $d_{G_-}:\R_{\ge0} \to \R_{\ge0}$ as follows: 
\bel{zeta}	
\begin{aligned}
&d_{G_-}(u) := \inf \big\{ {\bf d}(x) \text{ : } x\in \overline{\R^n\setminus\T} \ \ {\rm with} \ \ G(x) \geq u \big\}. 
\end{aligned}
\eeq
It is easy to see that $d_{G_-}$ is strictly increasing,   
$\lim_{u\to0^+}d_{G_-}(u)=0=d_{G_-}(0)$,
 and
\bel{Lzeta}
d_{G_-}(G(x))\le \d(x) \qquad \forall x\in \overline{\R^n\setminus\T}.
\eeq
Moreover, since \eqref{Lzeta} is the property  of $d_{G_-}$  that we are interested in,  we can assume without loss of generality $d_{G_-}$ continuous, by possibly replacing it with a continuous, strictly increasing function with value zero at zero, which approximates $d_{G_-}$ from below and still satisfies \eqref{Lzeta}.

We say that a function $G: \Omega \to \R$ is \textit{semiconcave} (with linear modulus) on $\Omega$ if it is continuous and for any closed subset $\M\subset\Omega$ there exists $\eta_{_\M} >0$ such that  
\[  G(x_1) + G(x_2) - 2G\left( \frac{x_1+x_2}{2}\right) \leq \eta_{_\M} |x_1-x_2|^2      \]
for all $x_1$, $x_2 \in \M$ such that the segment $\{\lambda x_1+(1-\lambda)x_2  \text{ : } \lambda\in[0,1]\}$ is contained in $\M$. If this property is valid just for any compact subset $\M\subset\Omega$, $G$ is said to be \textit{locally semiconcave} (with linear modulus) on $\Omega$.

\vsm 
Finally, let us collect some basic definitions on iterated Lie brackets.  If $g_1$, $g_2$ are $C^1$ vector fields on $\R^N$  the {\it Lie bracket of $g_1$ and $g_2$} is defined  as
$$
[g_1,g_2](x) := Dg_2(x)\cdot g_1(x) -  D g_1(x)\cdot g_2(x)\,\,\, \big(= - [g_2,g_1](x)\big).
$$ 
As is well-known, the map $[g_1,g_2]$ is a true
 vector field, i.e. it can be defined intrinsically. If the vector fields are sufficiently regular,  one can iterate the bracketing process: for instance, given a $4$-tuple ${\bf g}:=(g_1,g_2,g_3,g_4)$ of vector fields  one can construct  the brackets $[[g_1,g_2],g_3]$,  $[[g_1,g_2],[g_3,g_4]]$,  $[[[g_1,g_2],g_3],g_4]$, $[[g_2,g_3],g_4]$.   Accordingly, one can consider the  {\it  (iterated)  formal brackets } $B_1:=[[X_1,X_2],X_3]$, $B_2:=[[X_1,X_2],[X_3,X_4]]$, $B_3:=[[[X_1,X_2],X_3],X_4]$, $B_4:=[[X_2,X_3],X_4
 ]$ (regarded as  sequence of letters $X_1,\ldots,X_4$, commas, and left and right square  parentheses), so that, with obvious meaning of the notation,   $B_1({\bf g}) = [[g_1,g_2],g_3]$,   $B_2({\bf g}) = [[g_1,g_2],[g_3,g_4]]$, $B_3({\bf g}) =[[[g_1,g_2],g_3],g_4]$, $B_4({\bf g}) =[[g_2,g_3],g_4]$.

 \noindent The {\it degree} (or {\it length}) of a  formal bracket is   the number $\ell_{_B}$ of letters that are   involved in it.  For instance, the brackets $B_1, B_2, B_3, B_4$ have degrees equal to  $3$, $4$, $4$, and $3$, respectively.  By convention,  a single  variable $X_i$  is a formal bracket of degree $1$. {Given a formal bracket $B$ of degree $\ge2$, then there exist formal brackets $B_1$ and $B_2$ such that $B=[B_1,B_2]$. The pair $(B_1,B_2)$ is univocally determined and it is called  the {\em factorization} of $B$.}

\noindent The  {\it switch-number}  of a formal  bracket  $B$ is the number $\mathfrak{s}_{_B}$ defined recursively  as:  
\[ 
\mathfrak{s}_{_B} := 1 \  \text{ if $\ell_{B}=1$}; \qquad  \qquad
\mathfrak{s}_{_{B}}:= 2\big(\mathfrak{s}_{_{B_1}}+\mathfrak{s}_{_{B_2}}\big)\  \text{ if $\ell_B\geq2$ and ${B}=[B_1,B_2]$.}
\] 
\noindent For instance, the switch-numbers of $[[X_3,X_4],[[X_5,X_6],X_7]]$ and $[[X_5,X_6],X_7]$ are $28$ and $10$, respectively. When  no  confusion may arise,  we  also speak of  `degree and  switch-number of  Lie brackets of vector fields'.

\noindent We will use the following notion of {\em admissible bracket pair}:

 \begin{definition}\label{classCB}    Let $c\geq 0$,  $\ell\ge 1$, $q\geq c+\ell$ be integers, let  $B = B(X_{c+1},\ldots,X_{c+\ell})$ be  an iterated  formal bracket  and let   ${\bf g}=(g_1,\ldots,g_q )$  be a string of continuous vector fields. We say that  ${\bf g}$  is {\it of class $C^B$}   if  there exist non-negative integers $k_1\dots,k_q$ such that, by the only information that $g_i$ is of class  $C^{k_i}$ for every $i=1,\dots,q$  , one can deduce that 
$B({\bf g})$ is a  $C^0$ vector field  (see \cite[Def. 2.6]{FR2}). In this case,  we call $(B,{\bf g})$ an {\em admissible bracket pair} (of degree $\ell$ and switch number $\mathfrak{s}:=\mathfrak{s}_{_B}$).
  \end{definition}
 
 \noindent  For instance, if   $B= [ [[X_3,X_4],[X_5,X_6]],X_7 ]$ and  ${\bf g}=  
  (g_1,g_2,g_3,g_4,g_5,g_6,g_7,g_8)$,  then ${\bf g}$  is of class $C^B$ provided   $g_3,g_4,g_5,g_6$ are of class  $C^{3}$ and $g_7\in C^{1}$.

\section{Degree-$k$  sample stabilizability  with regulated cost} \label{section_mrf}
 Let us introduce   the  definitions of    degree-$k$ feedback generator,  sampling process,   and degree-$k$ sample stabilizability with regulated cost.
\subsection{Admissible trajectories}
In the following, we consider a control  set $A\subset\R^m$, a target $\T\subset\R^n$, a Lagrangian $l:\R^n\times A\to \R_{\ge0}$, and   vector fields $f_1,\dots, f_m:\R^n\to\R^n$. Furthermore, we define the function $\d:\R^n\to\R_{\geq0}$ as
\[
\d(x) = \inf_{y\in\T} |x-y| \qquad \forall x\in\R^n.
\]

\begin{definition}[Admissible  controls, trajectories and costs]\label{Admgen}   We say that  $(\alpha,y)$  is an {\em admissible control-trajectory pair}  if there exists $S_y\le +\infty$ such that:
	  \begin{itemize}
			\item[(i)] the control  $\alpha:[0,S_y[\to A$  is Lebesgue measurable; 
			\vsm
			\item[(ii)]  $y:[0,S_y[\to  \R^n\setminus \T$  is a  (Carathéodory)   solution   of the   control system
			\begin{equation}\label{control_sys}
				\dot y(s)= \sum_{i=1}^m  f_i(y(s))\, \alpha^i(s), 
			\end{equation}
		
satisfying,  if $S_y<+\infty$, $\lim_{s\to S_y^-}\d(y(s))=0$.  
	\end{itemize}
	Given an admissible pair $(\alpha,y)$,  we say that $(\alpha,y,\mathfrak{I})$  is an {\em admissible control-trajectory-cost triple} if
	$\mathfrak{I}:[0,S_y[\to  \R$ is the {\it integral cost},  given by 
			\begin{equation}\label{Pgen}
				\mathfrak{I}(s):= \int_ 0^{s }  l(y(\sigma),\alpha(\sigma))\, d\sigma, \quad \forall   s\in[0,S_y[.
			\end{equation}
	For every $x\in\R^n \setminus \T$,  we call   $(\alpha,y)$ and $(\alpha,y,\mathfrak{I})$ as above with $y(0)=x$,  an {\em admissible pair} and an {\em admissible triple  from $x$}, respectively.
	\vsm		
	For any admissible pair or triple   such that  $S_y<+\infty$, we  extend   $\alpha$, $y$, and	$\mathfrak{I}$  to $\R_{\geq0}$   by setting $\alpha(s):=\bar a$,  $\bar a\in A$ arbitrary,  and   $ (y,	\mathfrak{I})(s):= \lim_{\sigma\to S_y^-} (y(\sigma),	\mathfrak{I}(\sigma)),$  for any $s\geq S_y$.\footnote{In view of hypotheses {\bf (H1)}, {\bf (H2)} below, this limit always exists.}
\end{definition}

Throughout the whole paper, $k\ge1$ will be a given integer and
we will  consider the following sets of hypotheses.
{\em  
\begin{itemize}
\item[{\bf (H1)}]
The set  $A=\{\pm e_1, \dots, \pm e_m\}$\footnote{The vectors $e_1,\dots,e_m$ denote the elements of the canonical basis of $\R^m$.}
 and  $\T$ is closed  with compact boundary.
 \vsm
\item[{\bf (H2)}]  For any $a\in A$, $l(\cdot,a)$   is  locally  Lipschitz continuous on $\R^n$. Furthrmore,   $f_1,\dots, f_m$ belong to   $C^{k-1,1}_{b}(\Omega)$ for any bounded, nonempty subset $\Omega\subset\R^n$.  
 \end{itemize}}

 
 \subsection{Degree-k feedback generator} \label{def_control}
Let us introduce the sets of admissible bracket pairs associated with the vector fields $f_1,\dots,  f_m$  in the dynamics.  
 
\begin{definition}[Control label] \label{Lie_algebra} For any integer $h$ such that $1\le h\le k$,   let us define the set 
$\F^{(h)}$ of {\it control labels of degree $\leq h$} as 
\[ 
\F^{(h)} := \left\{ 
(B, \mathbf{g},\sgn) \ \left| 
\begin{array}{l} 
\sgn\in\{+,-\}  \text{ and $(B, \mathbf{g})$  is an admissible bracket pair} \\
 \text{ of degree $l_B\le h$ such that $\mathbf{g}:=(g_1,\ldots,g_q)$ satisfies} \\
 \text{ $g_j\in \{f_1,\dots,f_m\}$ for any $j=1,\dots,q$}
\end{array}
\right. 
\right\}.
\]
We will call {\em degree} and {\em switch number  of a control label} $(B, \mathbf{g},\sgn)\in \F^{(h)}$,  the degree and the switch number of $B$, respectively.  
 \end{definition}
 With any control label  in $\F^{(k)}$ let us associate an {\em oriented control,} defined as follows:
		\begin{definition}[Oriented control]\label{orcon} Consider a time $t>0$ and two  triples $(B, \mathbf{g},+), (B, \mathbf{g},-)  \in \F^{(k)}$,  
		  and let us define  the corresponding {\em oriented controls} ${\alpha}_{(B,{\bf g},+),t}$,  ${\alpha}_{(B,{\bf g},-),t}$, respectively, by means of the following recursive procedure:
	 	\begin{itemize} 
			\item[(i)] if $\ell_B=1$, i.e. $B=X_j$   for some integer $j\geq 1$,  we set
			$${\alpha}_{(B,{\bf g},+),t}(s):= e_i \qquad  \text{for any $s\in [0,t]$,}
		$$
		where   $i\in\{1,\dots,m\}$  is such that $B(\mathbf{g}) = f_i$   (i.e. $g_j=f_i$);
		\vsm
		\item[(ii)] if $\ell_B\geq 1 $,  we set $ {\alpha}_{(B,{\bf g},-),t}(s):=  -{\alpha}_{(B,{\bf g},+),t}(t-s)$ for any $s\in [0,t]$;
		\vsm
			\item[(iii)] if $\ell_B\ge2$ and $B=[B_1,B_2]$, we set $\mathfrak{s}_1:=\mathfrak{s}_{B_1}$,  $\mathfrak{s}_2:=\mathfrak{s}_{B_2}$, and  \newline $\mathfrak{s}:=\mathfrak{s}_B(=2\mathfrak{s}_1+2\mathfrak{s}_2)$ and,  for any $\sigma\in [0,t]$, we posit	
				\end{itemize} 
\bel{definizione_control}
{\alpha}_{(B,{\bf g},+),t}(\sigma):=
\begin{cases}
{\alpha}_{(B_1,{\bf g},+),\frac{\mathfrak{s}_1}{\mathfrak{s}}t}(\sigma)   \quad  &\text{if }\sigma\in \left[0,  \frac{\mathfrak{s}_1}{\mathfrak{s}}t \right[ \vspace{0.1cm}\\ 
{\alpha}_{(B_2,{\bf g},+),\frac{\mathfrak{s}_2}{\mathfrak{s}}t} \left(\sigma - \frac{\mathfrak{s}_1}{\mathfrak{s}}t \right)   \quad &\text{if }\sigma\in \left[\frac{\mathfrak{s}_1}{\mathfrak{s}}t,  \frac{\mathfrak{s}_1 + \mathfrak{s}_2}{\mathfrak{s}}t \right[ \vspace{0.1cm}\\
{\alpha}_{(B_1,{\bf g},-),\frac{\mathfrak{s}_1}{\mathfrak{s}}t}   \left(\sigma -  \frac{\mathfrak{s}_1 + \mathfrak{s}_2}{\mathfrak{s}}t \right) \,\,\, &\text{if }\sigma\in \left[ \frac{\mathfrak{s}_1 + \mathfrak{s}_2}{\mathfrak{s}}t,  \frac{2\mathfrak{s}_1 + \mathfrak{s}_2}{\mathfrak{s}}t \right[\vspace{0.1cm} \\
{\alpha}_{ (B_2,{\bf g},-),\frac{\mathfrak{s}_2}{\mathfrak{s}}t}  \left(\sigma - \frac{ 2\mathfrak{s}_1 + \mathfrak{s}_2}{\mathfrak{s}}t \right)  \,  &\text{if }\sigma\in \left[ \frac{ 2\mathfrak{s}_1 + \mathfrak{s}_2}{\mathfrak{s}}t, t \right].
\end{cases}
\eeq 
	
	\end{definition}

	\begin{example}
If $t>0$, $B=[X_3, [X_4,X_5]]$, and $\mathbf{g}=(f_3,f_2,f_1,f_2,f_3)$,  one has 	  
 \[
{\alpha}_{(B,{\bf g},+),t}(\sigma)\,\,=
\begin{cases}
\begin{aligned}
& e_1 \qquad &\text{if } \sigma\in [ 0, t/10 [   \\
& e_2 \qquad &\text{if } \sigma\in[ t/10 , 2t/10  [  \cup [ 7t/10, 8t/10  [  \\
& e_3 \qquad & \text{if } \sigma\in[2t/10 ,3t/10  [  \cup [ 6t/10, 7t/10  [  \\
& -e_1 \qquad &\text{if } \sigma\in[5t/10,6t/10[    \\
& -e_2 \qquad &\text{if } \sigma\in[ 3t/10 ,4t/10[  \cup [9t/10, t ]  \\
& -e_3 \qquad & \text{if } \sigma\in[4t/10 ,5 t/10[  \cup [8t/10,9t/10[ 
\end{aligned}
\end{cases} 
\]
and
\[ {\alpha}_{(B,{\bf g},-),t}(\sigma)=
\begin{cases}
\begin{aligned}
	& e_2 \qquad &\text{if } \sigma\in [ 0, t/10[ \cup[6 t/10,7 t/10[  \\
	& e_3 \qquad &\text{if } \sigma\in[ t/10 ,2 t/10[  \cup  [5 t/10, 6t/10[  \\
	& e_1 \qquad & \text{if } \sigma\in [4t/10 ,5t/10[   \\
	& -e_3 \qquad &\text{if } \sigma\in[ 3t/10, 4 t/10[ \cup  [7 t/10,8t/10[   \\
	& -e_2 \qquad &\text{if } \sigma\in [2t/10, 3t/10[\cup [8t/10,9t/10[   \\
	& -e_1 \qquad & \text{if } \sigma\in [9t/10 , t  ]  .
\end{aligned}
\end{cases} 
 \]
\end{example}

%
%

	\begin{definition}[Degree-$k$ feedback generator] \label{feedback}
		We call {\em degree-$k$ feedback generator} any map $\V:\R^n\setminus\T\to \F^{(k)}$  and write 
		 \bel{les}
\begin{array}{l}
\quad\qquad x\mapsto  \mathcal{V}(x):=\left(B_x,\mathbf{g}_x,\sgn_{\text{\tiny $x$}}\right),   \\[1.5ex]
 \ell(x) := \ell_{_{B_x}}, \qquad \mathfrak{s}(x) :=\mathfrak{s}_{_{B_x}}\quad \forall x\in \R^n\setminus\T.  
 \end{array}
 \eeq
\end{definition}
\begin{definition}[Multiflow] \label{multiflow} 
	Let a degree-$k$ feedback generator  $\mathcal{V}$ be given.    For every  $x\in\R^n\setminus\T$  and $t>0$,  let us define  the control
	$	\alpha_{x,t}:\R_{\geq0}\to A$ by setting
	$$
	\alpha_{x,t}(s):={\alpha}_{\mathcal{V}(x),t}(s\wedge t) \qquad \text{for any  $s\in\R_{\geq0}$}.
	$$
The maximal  solution\footnote{By maximal solution we mean  the solution to \eqref{ytx} defined on the largest subinterval of $\R_{\ge0}$ that contains $0$.}    to the Cauchy problem 
	\bel{ytx}
	\dot y(s)= \sum_{i=1}^m f_i\big(y(s)\big)\alpha^i_{x,t}(s), \qquad
		y(0)=x,   
		\eeq 
		will be called the  {\em $\mathcal{V}$-multiflow starting from $x$ up to the time $t$}  (or simply   {\em $\mathcal{V}$-multiflow}, when $x$ and $t$ are clear from the context) and will be denoted by $y_{x,t}$.
\end{definition}

\begin{remark} \label{rem_asym}
The relevance of the above construction is due to the asymptotic formula \cite[Thm. 3.7]{FR2}. In particular, under hypothesis {\bf (H2)} it is possible to prove that,  given a degree-$k$ feedback generator $\V$, for any $R>0$ there exist $\delta>0$ and $\w>0$ such that, for every $x\in\R^n$ with $\d(x)\leq R$ and every $t\in[0,\delta]$,    each $\V$-multiflow $y_{x,t}$ starting from $x$ up to the time $t$ satisfies
\[
\Big| y_{x,t}(t)-x- \sgn_x B_x({\bf g}_x)(x) \Big| \leq \w t \bigg(\frac{t}{{\mathfrak s}(x)}\bigg)^{\ell(x)}.
\]
The previous inequality shows that a $\V$-multiflow $y_{x,t}$ allows us to move approximately in the direction of the vector   field $\sgn_x B_x({\bf g}_x)$ evaluated at $x$, with an error which is proportional to $t^{\ell(x)+1}$.
\end{remark}

\subsection{
Sampling processes}
 We call $\pi:=\{ s_j \}_{j}$    {\em a partition of $\R_{\ge0}$} if $s_0=0$, $s_j  < s_{j+1}$ for any $j\in\N$,  and $\ds\lim_{j\to+\infty}s_j=+\infty$. The {\em sampling time}, or   {\em diameter,} of $\pi$  is the supremum of $\{s_{j+1}-s_j: \ \ j\in\N\}$.
 
\begin{definition}[$\V$-sampling  process-cost]\label{k_traj}
Given a degree-$k$ feedback generator $\V:\R^n\setminus\T \to \F^{(k)}$,   we refer to 
$(x, \pi, \alpha_x^\pi, y_x^\pi)$ as a {\em $\V$-sampling process}  if $x\in\R^n\setminus\T$, $\pi:=\{s_j\}_j$ is a partition of $\R_{\geq0}$,   $y_x^{\pi}$ is a continuous function  taking values in $\R^n\setminus\T$  defined  recursively by  
\bel{traj_generalizzate}
\begin{cases}
y_x^{\pi}(s) := y_{x_j,t_j}(s-s_{j-1}) \qquad \text{ for all   $s\in [s_{j-1}, \sigma_j[$ \, and \, $1\leq j \leq\J$} \\
y_x^{\pi}(0) = x,
\end{cases}
\eeq
where,  for all $j\geq 1$, $y_{x_j,t_j}$ is a $\V$-multiflow with $t_j:= s_j-s_{j-1}$, 
  $x_j:= y_x^{\pi}(s_{j-1})$ for all $1\leq j \leq \J$,  and\footnote{We mean $\J=+\infty$    if the set is empty.}
$$
\begin{array}{l}
 \sigma_j :=  \sup\left\{ \sigma\ge s_{j-1}  \text{ : } y_{x_j, t_j} \text{ is defined on } [s_{j-1},\sigma[, \  y_{x_j, t_j}([s_{j-1},\sigma[)\subset\R^n\setminus\T \right\},\\[1.0ex]
\J :=\inf\{j: \ \sigma_j\le s_j\}.
\end{array}
$$
We will refer to the map $y_x^\pi$, which is defined on the maximal interval $[0, \sigma_{\J }[$,  as  a {\em $\V$-sampling trajectory}.
 According to Def. \ref{multiflow}, the  corresponding  {\em $\V$-sampling control} $\alpha_{x}^{\pi}$ is defined as
\[
\alpha_x^{\pi}(s) := \alpha_{x_j, t_j}(s - s_{j-1}) \qquad \text{for all }s\in[s_{j-1}, s_j[\cap[0, \sigma_{\J }[, \quad 1\leq j\le \J.
\]
Furthermore, we define the {\em $\V$-sampling cost} $\mathfrak{I}_x^{\pi}$ as
\begin{equation}\label{Scostgen}
		\mathfrak{I}_x^{\pi}(s):=\int_0^s   l(y_x^{\pi}(\sigma),\alpha_x^{\pi}(\sigma))\, d\sigma,  \quad \forall s\in[0, \sigma_{\J }[,
			\end{equation}
and we call $(x, \pi, \alpha_x^\pi, y_x^\pi, \mathfrak{I}_x^\pi)$  a {\em $\V$-sampling process-cost}.
\end{definition}
 If  $(\alpha_x^{\pi},y_x^{\pi})$ [resp. $(\alpha_x^{\pi},y_x^{\pi}, \mathfrak{I}_x^{\pi})$] is an admissible pair [resp. triple] from $x$,   we say that the $\V$-sampling process $(x, \pi, \alpha_x^\pi, y_x^\pi)$ [resp. $\V$-sampling process-cost $(x, \pi, \alpha_x^\pi, y_x^\pi,\mathfrak{I}_x^{\pi})$] is {\em admissible}. In this case,
 when $ \sigma_{\J }=S_{y_x^\pi}<+\infty$, we extend $\alpha_x^{\pi}$, $y_x^{\pi}$, and $\mathfrak{I}_x^{\pi}$ to $\R_{\geq0}$,  as described  in Def. \ref{Admgen}.

\vsm

The   definition below  of {\it$\del$-scaled $\V$-sampling process-cost}, where $\V$ is a degree-$k$ feedback generator and  $\del:=(\delta_1,\ldots,\delta_k)\in\R^k_{>0}$, prescribes bounds on the steps  $s_j - s_{j-1}$ from above and from below, depending on the degree $\ell_j\in\{1,\dots,k\}$ of the involved formal bracket. 

 
\begin{definition}   [$\del$-scaled $\V$-sampling process-cost] \label{vconsistent}
		Let a vector  $\del:=(\delta_1,\ldots,\delta_k)$ in $\R^k_{>0}${, which we will  call {\it multirank},} be given,   and consider   a degree-$k$ feedback generator $\V:\R^n\setminus\T\to \F^{(k)}$. We refer to $(x, \pi, \alpha_x^\pi, y_x^\pi, \mathfrak{I}_x^\pi)$ [resp. $(x, \pi, \alpha_x^\pi, y_x^\pi)$] as a {\em $\del$-scaled $\V$-sampling process-cost}  [resp. {\em $\del$-scaled $\V$-sampling process}] provided  it is    a $\V$-sampling  process-cost [resp. $\V$-sampling process] such that the partition $\pi=\{s_j\}_j$ satisfies
		\bel{vconsistent_part}
		\Delta(k) \, \delta_{\ell_j} \leq s_j - s_{j-1}
		 \leq \delta_{\ell_j} \qquad \forall j\in\N, \ j\ge 1,
		\eeq
 where   $\ell_j := \ell(y_x^\pi (s_{j-1}))$ (see Def. \ref{les})  and  $\Delta(k):= \frac{k-1}{k}$.
\end{definition}

\begin{remark}\label{multirank}
When $k=1$, so that  $\Delta(1)=0$ and $\del=\delta\in\R_{>0}$,   the $\V$-sampling trajectory $y_x^\pi$ of a $\del$-scaled $\V$-sampling process-cost $(x, \pi, \alpha_x^\pi, y_x^\pi, \mathfrak{I}_x^\pi)$ is nothing but a standard $\pi$-sampling trajectory associated with a partition $\pi$ of sampling time smaller than $\delta$ (see for instance \cite{CLSS,CLRS,LM2}). 
 In fact, in this case  the degree-$1$ feedback generator $\V$ takes values in $\F^{(1)}$  and all the formal brackets $B$ of the elements $(B, \mathbf{g},\sgn)$ belonging to $\F^{(1)}$ have degree and switch number equal to 1. Accordingly,
 in view of Def. \ref{orcon},(i), each oriented control $\alpha_{x_j,t_j}$ included in the  $\V$-sampling control $\alpha_x^\pi$ (see Def. \ref{k_traj}), turns out to be constant on $[s_{j-1}, s_j]$.  
\end{remark}

\begin{remark}
When the system approaches the target by approximating the direction of a degree-$\ell$ Lie  bracket for a time $t$,   the distance from the target is reduced at most of a quantity which is proportional to $t^\ell$, as one can deduce from the estimate in Rem. \ref{rem_asym}.  Therefore, the lower bound in condition \eqref{vconsistent_part} 
 ensures that for each sampling trajectory the sum of the displacements is divergent, so that to obtain {\it uniform attractiveness} (see Def. \ref{samplestab_costo} and Rem. \ref{rem_attractiveness} below). 
We do observe that the lower bound vanishes when $k=1$, as in this case the sum of the displacements up to $i$ iterations is proportional to $s_i=\sum_{j=1}^i (s_j-s_{j-1})$, which diverges to $+\infty$ as $i\to+\infty$ by the very definition of partition of $\R_{\geq0}$.
\end{remark}


\subsection{Degree-$k$  sample stabilizability and global asymptotic controllability with regulated cost}
The notion  of  {\it degree-$k$  sample stabilizability of control system \eqref{control_sys} to $\T$ with  regulated cost} we will introduce in Def. \ref{samplestab_costo} below relies on the following notion 
 of {\it integral-cost-bound function}. 

%

\begin{definition}[Integral-cost-bound function]\label{ib}  
We call {\it integral-cost-bound function} any map ${\bf \Psi}:  \R_{>0}^3  \to\R_{\ge0}$, given by
$$
{\bf \Psi}(R,v_1,v_2):=\Lambda (R)\cdot\Psi(v_1,v_2),
$$
where 
\begin{itemize}
\item[(i)] the function $\Lambda:\R_{>0}\to\R_{>0}$   is  continuous, increasing, and  $\Lambda\equiv 1$ if $k=1$;
\vsm
\item[(ii)]  the function  $\Psi:  \R_{>0}^2  \to\R_{\ge0}$ is  continuous, 
 increasing and unbounded in the first variable,   strictly decreasing in the second variable;
\vsm
\item[(iii)]   there exists a  strictly decreasing bilateral sequence $(u_i)_{i\in\Z}\subset\R_{>0}$, such that, for some (hence, for any)  $j\in\Z$, one has
	\bel{ppsi}  \sum_{i=j}^{+\infty}\Psi(u_{i},u_{i+1})<+\infty,\quad\text{and} \lim_{i\to-\infty} u_i  = +\infty,  \quad  \lim_{i\to+\infty} u_i  = 0.  
	\eeq
	\end{itemize}	
\end{definition}

\begin{definition}[Degree-$k$ sample stabilizability with regulated cost]
\label{samplestab_costo}
 Let $\V:\R^n\setminus\T\to\F^{(k)}$ be  a  degree-$k$ feedback generator and let $U:\overline{\R^n\setminus\T}\to \R_{\geq0}$ be a  continuous, proper,   and positive definite function.  We  say that $\V$ {\it degree-$k$  $U$-sample stabilizes control system \eqref{control_sys} to  $\T$} if 
there exist a multirank map $\del:\R_{>0}^2\to \R_{>0}^k$  such that, for any $0<r<R$,   every $\del(R,r)$-scaled $\V$-sampling process $(x, \pi, \alpha_x^\pi, y_x^\pi)$ with $\d(x)\leq R$ is admissible and satisfies
		$$\begin{array}{l}
		 \text{(i)} \ {\bf d}(y_x^{\pi}(s)) \leq \Gamma(R)\quad \forall s \geq 0,
		\\[1.5ex]
		 \text{(ii)} \
			 \mathbf{t}(y_x^\pi,r):=\inf\big\{s\geq0 \text{ : } U(y_x^\pi(s))\leq {\varphi}(r) \big\}\leq {\bf T}(R,r),
											\\[1.5ex] 
	 \text{(iii)} \ \text{if $\exists\tau>0$:} \ U(y_x^\pi(\tau))\leq {\varphi(r)}, \  \text{then}   \  \d(y_x^\pi(s))\leq r \  \ \forall s\geq\tau,
		\end{array}$$
		where $\Gamma:\R_{\ge0}\to\R_{\ge0}$,   $\varphi:\R_{\geq0}\to\R_{\geq0}$ are continuous, strictly increasing and unbounded    functions  with $\Gamma(0)=0$, $\varphi(0)=0$,  and $\mathbf{T}:\R_{>0}^2\to \R_{>0}$ is a function increasing in the first variable and decreasing in the second one.
	We will refer to property (i) as {\em Overshoot boundedness}  and to (ii)-(iii) as {\em $U$-Uniform attractiveness}. 
	
		If, in addition, there exists an 
  integral-cost-bound function
${\bf \Psi}: \R_{>0}^3\to \R_{ \ge0} $   such  
	 that the $\V$-sampling process-cost $(x, \pi, \alpha_x^\pi, y_x^\pi,\mathfrak{I}_x^\pi)$ associated with the $\V$-sampling process $(x, \pi, \alpha_x^\pi, y_x^\pi)$ above  satisfies the inequality
	 $$
	 \,\,\,\begin{array}{l}
\text{ (iv)}\,\,\,\, \ds \mathfrak{I}_x^\pi(\mathbf{t}(y_x^\pi,r)) = 	\int_0^{\mathbf{t}(y_x^\pi,r)}   l(y_x^{\pi}(s),\alpha_x^{\pi}(s))\, ds   \leq  {\bf \Psi}\Big(R,\, U(x), \,U(y_x^\pi(\mathbf{t}(y_x^\pi,r)))\Big),  
\end{array}
	 $$ 
we say that  $\V$ {\em  degree-$k$  $U$-sample stabilizes control system \eqref{control_sys} to $\T$ with ${\bf \Psi}$-regulated cost}. We will refer to (iv) as {\em Uniform cost  boundedness}.

  When there exist some function $U$,  some {degree}-$k$ feedback generator $\V$ [and an integral-cost-bound function ${\bf \Psi}$] such that  $\V$ degree-$k$ $U$-sample stabilizes control system  \eqref{control_sys} to  $\T$ [with ${\bf \Psi}$-regulated cost], we  say that  system  \eqref{control_sys} is {\em degree-$k$ $U$-sample stabilizable to $\T$}  [{\em with ${\bf \Psi}$-regulated cost}]. Sometimes, we will simply say that system  \eqref{control_sys} is {\em degree-$k$  sample stabilizable to $\T$}  [{\em with regulated cost}]. 
\end{definition}

\begin{remark}\label{rem_attractiveness} Using the notations of Def. \ref{samplestab_costo}, the  $U$-uniform attractiveness immediately implies  the following, standard uniform attractiveness condition   
\[
\exists\, {\bf S}(R,r)>0 \quad \text{such that} \quad  \d(y_x^\pi(s)) \leq r \qquad \text{for all $s\geq {\bf S}(R,r)$}
\]
(with ${\bf S}(R,r)\le {\bf T}(R,r)$), that characterizes the classical  notion of sample stabilizability (see e.g. \cite{CLSS}).  Similarly,  for $k=1$ (so that $\Lambda\equiv 1$)  the {\it uniform cost boundedness} condition above implies the cost  bound condition considered in the notion of sample stabilizability with regulated cost first introduced in \cite{LM}. Indeed, from Def. \ref{samplestab_costo}, (iv)   it follows that  
\[
\int_0^{{\bf s}(y_x^\pi,r)} l(y_x^{\pi}(s),\alpha_x^{\pi}(s))\, ds   \leq \Psi(U(x),\varphi(r))\le \Psi(U(x),0)=:W(x),
\]
where ${\bf s}(y_x^\pi,r):= \inf\{s\geq0 \text{ : } \d(y_x^\pi(\sigma))\leq r \ \text{ for all }\sigma\geq s\}$, since   ${\bf s}(y_x^\pi,r)\le   {\bf t}(y_x^\pi,r)$ by conditions (ii) and (iii) above.   Therefore, thanks to Rem. \ref{multirank},  it is easy to see that  degree-1 sample stabilizability with regulated cost implies the notion of sample stabilizability with regulated cost given  in \cite{LM}. In addition,  we will show in Sec. \ref{sec_comparison} below, that  the notion of degree-$k$ sample stabilizability (without cost regulation)  is in fact equivalent to previous definitions of sample stabilizability. 
\end{remark}

\begin{definition}[Global asymptotic controllability with regulated cost] \label{GAC_costo}
	Control system \eqref{control_sys} is said to be \textit{globally asymptotically controllable} (in short, GAC) {\em to $\T$} if,  for any   $0<r<R$,  there exist ${\bf \Gamma}= {\bf \Gamma}(R)>0$ and  ${\bf S}={\bf S}(R,r)>0$   with $\ds\lim_{R\to0} {\bf \Gamma}(R)=0$, such that   for every $x\in \R^n$ with $\d(x)\leq R$ there exists an admissible  control-trajectory pair $(\alpha,y)$ from $x$ that satisfies the following conditions (i)--(iii):
	$$
	\begin{array}{lllll}
		
		&{\rm (i)} &{\bf d}(y(s)) \leq {\bf \Gamma}(R)\qquad &\forall s \geq 0; \qquad\qquad\qquad &\text{({\it Overshoot boundedness})}\\[1.5ex]
		&{\rm (ii)} &{\bf d}(y(s)) \leq r \qquad &\forall s \geq {\bf S}(R,r);  &\text{({\it Uniform attractiveness})}\\[1.5ex]
		&{\rm (iii)} &\ds\lim_{s \to +\infty} {\bf d}(y(s)) = 0. &\, &\text{({\it Total attractiveness})}
	\end{array}
	$$
	If, moreover, there exists a  function ${\bf W}:\overline{\R^n\setminus\T}\to\R_{\geq0}$ continuous, proper, and positive definite, such that the admissible control-trajectory-cost triple $(\alpha,y,\mathfrak{I})$ associated with the admissible control-trajectory pair $(\alpha,y)$ above satisfies
	$$
	\begin{array}{l}
		\qquad{\rm (iv)} \,\,\, \ds \int_0^{S_y} l(\alpha(s),y(s)) \, ds \leq {\bf W}(x), \,\,\,\qquad\qquad \,\, \text{({\it Uniform cost boundedness})}
	\end{array}
	$$
	where $S_y\le +\infty$ is as in Def. \ref{Admgen}, we say that   control system  \eqref{control_sys} is  \textit{globally asymptotically controllable to $\T$ with} {\em${\bf W}$-regulated cost} (or simply, {\em with regulated cost}).
\end{definition}

 \section{Main result}\label{concludesec}
 In this section, we show that  degree-$k$ feedback stabilizability  with regulated cost implies global asymptotic controllability with regulated cost.   
\vsm

To this aim, given  an  integral-cost-bound  function ${\bf\Psi}=\Lambda\Psi$, let us introduce 
   the strictly increasing and unbounded map $ {\bf\Phi}:\R_{\geq0}\to\R_{\geq0}$  defined  by setting
\bel{Phi}
{\bf\Phi}(u):=
\begin{cases}
	\ds\Psi(u, u_{i+1}) + \sum_{j=i+1}^{+\infty} \Psi(u_{j},u_{j+1}) \qquad&\text{if $u\in ]u_{i+1},u_i]$  for some  $i\in\Z$,} \\
	0 \qquad \qquad&\text{if $u=0$,}
\end{cases}
\eeq
where $(u_i)_i$ is the sequence of Def. \ref{ib}. 
Since in Thm. \ref{sample-->gaccosto} below we will use ${\bf\Phi}$ as an upper bound for the cost, we can assume without loss of generality that ${\bf\Phi}$ is continuous and strictly increasing. By this we mean that we will   still denote by ${\bf\Phi}$ a continuous and strictly increasing approximation of it from above, such that $\ds\lim_{u\to0}{\bf\Phi}(u)=0$ (this limit follows from the properties of $\Psi$).


\begin{theorem}\label{sample-->gaccosto} Assume {\bf (H1)}, {\bf (H2)}. 
Then the following statements hold:
\begin{itemize}
\item[{\rm (i)}] If control system \eqref{control_sys} is degree-$k$ sample stabilizable to $\T$, then it is globally asymptotically controllable to $\T$.

 \item[{\rm (ii)}] If  control system \eqref{control_sys} is degree-$k$ $U$-sample stabilizable to $\T$ with ${\bf\Psi}$-regulated cost, then system \eqref{control_sys} is globally asymptotically controllable to $\T$ with ${\bf W}$-regulated cost,  the map  ${\bf W}$  being defined as  
\[
{\bf W}(x)= \Lambda(\varphi^{-1}(U(x)) {\bf \Phi}(U(x)),
\]
where $\Lambda$ is the same as in the definition of ${\bf\Psi}$,  $\varphi$ is as in Def. \ref{samplestab_costo}, and  ${\bf\Phi}$ is as in \eqref{Phi}.
\end{itemize}
\end{theorem}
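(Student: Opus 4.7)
The plan is to iterate the degree-$k$ $U$-sample stabilization property along thresholds derived from the bilateral sequence $(u_i)_{i\in\Z}$ of Def.~\ref{ib}, and to concatenate the resulting finite-horizon $\V$-sampling trajectories into a single admissible control-trajectory(-cost) triple satisfying the GAC conditions (with the announced cost bound for part~(ii)). Concretely, given $x\in\R^n\setminus\T$, pick $i_0\in\Z$ with $U(x)\in\,]u_{i_0+1},u_{i_0}]$, set $r_i:=\varphi^{-1}(u_{i_0+i+1})$ and $R_{i+1}:=r_i$ for $i\geq 0$, and let $R_0\geq\d(x)$ be arbitrary. Inductively, starting from $x_0:=x$, take at the $i$-th step a $\del(R_i,r_i)$-scaled $\V$-sampling process-cost $(x_i,\pi_i,\alpha_{x_i}^{\pi_i},y_{x_i}^{\pi_i},\mathfrak{I}_{x_i}^{\pi_i})$: by (ii) of Def.~\ref{samplestab_costo} the first time $\tau_i:=\mathbf{t}(y_{x_i}^{\pi_i},r_i)\leq\mathbf{T}(R_i,r_i)$ at which $U$ meets the level $\varphi(r_i)=u_{i_0+i+1}$ is finite, and by (iii) the trajectory thereafter stays in $\{\d\leq r_i\}$. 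Setting $x_{i+1}:=y_{x_i}^{\pi_i}(\tau_i)$ gives $\d(x_{i+1})\leq r_i=R_{i+1}$, closing the induction.

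Next, glue these pieces: with $T_0:=0$, $T_{i+1}:=T_i+\tau_i$, $T_\infty:=\lim_i T_i$, define $\alpha(s):=\alpha_{x_i}^{\pi_i}(s-T_i)$ on $[T_i,T_{i+1})$ and let $y$ be the corresponding Carath\'eodory solution. Measurability of $\alpha$ is automatic, continuity of $y$ at the joints follows from $y_{x_i}^{\pi_i}(\tau_i)=x_{i+1}$, and one obtains an admissible pair in the sense of Def.~\ref{Admgen}; in particular, if $T_\infty<+\infty$ then $\d(y(s))\to 0$ as $s\to T_\infty^-$, since $\d(y(s))\leq\Gamma(R_i)\to 0$ on the successive intervals $[T_i,T_{i+1})$. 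The GAC conditions follow: overshoot boundedness from (i) of Def.~\ref{samplestab_costo} applied segment by segment; uniform attractiveness by choosing, for any $r>0$, an index $i^*$ with $r_{i^*}\leq r$ so that $\d(y(s))\leq r$ for $s\geq\sum_{i=0}^{i^*}\mathbf{T}(R_i,r_i)=:{\bf S}(R,r)$; and total attractiveness from $r_i\to 0$. This proves part~(i).

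For part~(ii), condition~(iv) bounds the cost on segment~$i$ by $\Lambda(R_i)\,\Psi(U(x_i),U(x_{i+1}))$. Continuity of $U\circ y$ forces $U(x_{i+1})=\varphi(r_i)=u_{i_0+i+1}$ (whenever $U(x_i)>u_{i_0+i+1}$), so by the monotonicity of $\Psi$ stage~$0$ contributes at most $\Lambda(R_0)\,\Psi(U(x),u_{i_0+1})$ and stage~$i\geq 1$ at most $\Lambda(R_i)\,\Psi(u_{i_0+i},u_{i_0+i+1})$; summation recognises the series defining ${\bf\Phi}(U(x))$ and produces a total cost bounded by $\bigl(\sup_i\Lambda(R_i)\bigr)\cdot{\bf\Phi}(U(x))$. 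For $i\geq 1$ the bound $R_i=\varphi^{-1}(u_{i_0+i})<\varphi^{-1}(U(x))$ is immediate, and I would choose $R_0=\varphi^{-1}(U(x))$ by invoking the (nontrivial) inequality $\d(x)\leq\varphi^{-1}(U(x))$. Monotonicity of $\Lambda$ then gives $\sup_i\Lambda(R_i)\leq\Lambda(\varphi^{-1}(U(x)))$, whence a total cost bounded by ${\bf W}(x)$. To establish $\d(x)\leq\varphi^{-1}(U(x))$, fix any $r>\varphi^{-1}(U(x))$: then $U(x)<\varphi(r)$, by continuity of $U\circ y_x^\pi$ along any $\V$-sampling trajectory from $x$ one has $U(y_x^\pi(\tau))<\varphi(r)$ for every sufficiently small $\tau>0$, condition~(iii) then yields $\d(y_x^\pi(s))\leq r$ for $s\geq\tau$, and letting $\tau\to 0^+$ (by continuity of $\d\circ y_x^\pi$) gives $\d(x)\leq r$; the claim follows by letting $r\downarrow\varphi^{-1}(U(x))$.

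The main obstacle is precisely this last step producing $\d(x)\leq\varphi^{-1}(U(x))$, which is what permits $\Lambda(\varphi^{-1}(U(x)))$ rather than $\Lambda(\d(x))$ to appear in ${\bf W}(x)$. A secondary technical concern is coherent book-keeping for the concatenation: measurability of $\alpha$ on $\R_{\geq 0}$, consistent treatment of the dichotomy $T_\infty<+\infty$ versus $T_\infty=+\infty$, and the (routine) selection at each iteration of a $\del(R_i,r_i)$-scaled $\V$-sampling process-cost, whose existence is guaranteed by the very hypothesis of degree-$k$ $U$-sample stabilizability with regulated cost.
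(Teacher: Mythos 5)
Your proposal follows essentially the same route as the paper's proof: iterate the stabilizability property along the decreasing $U$-levels $u_{i_0+i}$, concatenate the resulting sampling trajectories at the hitting times $\mathbf{t}(\cdot,r_i)$, and control the factor $\Lambda$ via the inequality $\d(x)\le\varphi^{-1}(U(x))$ (which the paper isolates as its equation \eqref{maggiore_dist} and proves by exactly your continuity argument), then sum the stage-wise cost bounds into ${\bf\Phi}(U(x))$. Two small repairs are needed. First, for uniform attractiveness the choice $r_{i^*}\le r$ is not enough: for $s\ge T_{i^*+1}$ the only available bound is the overshoot estimate $\d(y(s))\le\Gamma(R_{i^*+1})=\Gamma(r_{i^*})$ of the later stages, and nothing forces $\Gamma(u)\le u$; you must pick $i^*$ with $r_{i^*}\le\Gamma^{-1}(r)$, as the paper does. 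Second, your ${\bf S}(R,r)=\sum_{i=0}^{i^*}\mathbf{T}(R_i,r_i)$ and your overshoot constant depend on $x$ through $i_0$, $U(x)$ and the radii $R_i$; to get genuinely uniform data one must bound $U(x)\le d_{U_-}^{-1}(R)$ using \eqref{Lzeta}, observe that the number of levels between $d_{U_-}^{-1}(R)$ and $\varphi(\Gamma^{-1}(r))$ and the extreme radii depend only on $(R,r)$ and the fixed sequence $(u_i)$, and invoke the monotonicity of $\mathbf{T}$ — a point the paper makes explicitly. Finally, for part (i) the sequence $(u_i)$ of Def.~\ref{ib} is not available (no ${\bf\Psi}$ is given); one should instead start from an arbitrary strictly decreasing bilateral sequence of radii, which is what the paper does and which your scheme accommodates without change.
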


\begin{proof}
  Let us begin with the proof of   (ii).  First of all, let $\V$, $U$, $\del$, $\Gamma$, $\varphi$, ${\bf T}$ and ${\bf\Psi}=\Lambda\Psi$ be as in Def. \ref{samplestab_costo}. 
Let  $(u_i)_{i\in\Z}\subset \R_{>0}$ be a strictly decreasing bilateral sequence with $\ds\lim_{i\to-\infty}u_i=+\infty$ and  $\ds\lim_{i\to+\infty}u_i=0$  associated with $\Psi$, as in Def. \ref{ib}, and define ${\bf\Phi}$ as in  \eqref{Phi}.

\noindent Notice that it is implicit in Def. \ref{samplestab_costo} that 
\bel{maggiore_dist}
\varphi^{-1}(U(x))\geq \d(x) \qquad \text{for any $x\in \R^n\setminus\T$.}
\eeq
Indeed, if this is not true  there exist  $x\in\R^n$ and $r>0$ such that $\varphi^{-1}(U(x))<  r< \d(x)$. Thus, for any $\del(\d(x),r)$-scaled $\V$-sampling process-cost $(x, \pi, \alpha_x^\pi, y_x^\pi, \mathfrak{I}_x^\pi)$, Def. \ref{samplestab_costo}, (iii) implies that $\d(y_x^\pi(s))\leq r$ for any $s\geq0$, even if $\d(x)=\d(y_x^\pi(0))>r$, a contradiction.

\vsm

\noindent
 {\it Step 1}.
Fix a pair $(R,r)$ with $0<r<R$, and $x\in\R^n\setminus\T$ with $\d(x)\le R$. Let $\bar \j(x)$ be the integer such that 
\[
U(x)\in ]u_{\bar \j(x)+1}, u_{\bar \j(x)}]  
\]
and define the bilateral sequence $(\tilde u_i)_{i\in\Z}$ as follows:
\[
\tilde u_0:=U(x), \qquad \tilde u_i := u_{\bar\j(x)+i} \ \ \forall i\geq1, \qquad \tilde u_{i} := u_{\bar\j(x)+i+1} \ \ \forall i\leq -1.
\]
Hence, consider the bilateral sequence $(\rho_i)_{i\in\Z}$ given by
\bel{rhoi}
\rho_i :=\varphi^{-1}(\tilde u_i) \qquad \text{for all $i\in\Z$.}
\eeq
The properties of $\varphi$ imply that $(\rho_i)_{i\in\Z}$ is strictly decreasing,\footnote{More precisely,    the only exception to  this strict monotonicity might consist in   $\rho_0=\rho_{-1}$, as soon as $U(x)= \tilde u_0= u_{\bar \j(x)}$.} with $\ds\lim_{i\to-\infty}\rho_i=+\infty$ and  $\ds\lim_{i\to+\infty}\rho_i=0$, and from \eqref{maggiore_dist}, \eqref{Lzeta},  and the previous estimates it follows that  
\bel{r0}
\d(x)\leq \rho_{0} =\varphi^{-1}(U(x)) \leq \varphi^{-1}( d_{U_-}^{-1}(R))
 =:\zeta(R),
\eeq
where $d_{U_-}$ is as in \eqref{zeta}. Moreover, the properties of $\Gamma$ imply that
\bel{Gamma_decrescente}
\lim_{i\to-\infty}\Gamma(\rho_i)=+\infty, \qquad \lim_{i\to+\infty}\Gamma(\rho_i)=0.
\eeq
Now, define $\i(r)$ and $\i(R)$ as the  integers such that 
\bel{ir}
\varphi(\Gamma^{-1}(r))\in]\tilde u_{\i(r)},\tilde u_{\i(r)-1}], \qquad d_{U_-}^{-1}(R) \in]\tilde u_{\i(R)+1},\tilde u_{\i(R)}],
\eeq
so that 
$
\i(r)-\i(R)>0
$
is the number of strips $\{z\in\R^n: \tilde u_{i+1}< U(z)\le \tilde u_i\}$ such that $i\in\{\i(R),\dots,\i(r)-1\}$. Clearly, $\i(R)\leq 0$, as $U(x)\leq d_{U_-}^{-1}(\d(x))\le d_{U_-}^{-1}(R)$. 

\vsm

\noindent
 {\it Step 2}.
Let us consider the control-trajectory pair $(\alpha,y)$ obtained by a recursive procedure as follows:
$$
\begin{cases}
x_1:=x, \qquad &\Ss_0:=0,\\
\Ss_i-\Ss_{i-1}= \Ss(y_{x_i}^{\pi_i},\rho_{i}), \qquad &x_{i+1}:=y(\Ss_{i}), \\
(\alpha,y)(s):= (\alpha^{\pi_i}_{x_i},y^{\pi_i}_{x_i})(s-\Ss_{i-1}) &\forall s\in[\Ss_{i-1},\Ss_i],
\end{cases}
$$
where, for any integer $i\ge 1$, $(x_i, \pi_i, \alpha_{x_i}^{\pi_i}, y_{x_i}^{\pi_i}, \mathfrak{I}_{x_i}^{\pi_i})$ is a $\del(\rho_{i-1},\rho_{i})$-scaled $\V$-sampling process-cost, and    $\Ss(y_{x_i}^{\pi_i},\rho_{i})$ is as in Def. \ref{samplestab_costo}, so that 
$$
U(x_{i+1})=U(y_{x_i}^{\pi_i}(\Ss(y_{x_i}^{\pi_i},\rho_{i}))=\tilde u_i=u_{\bar \j(x) +i}.
$$
 Indeed, in view of \eqref{r0}, it can be deduced by means of an induction argument that $\d(x_i)\le \rho_{i-1}$ for any $i\geq1$, so that $(x_i, \pi_i, \alpha_{x_i}^{\pi_i}, y_{x_i}^{\pi_i}, \mathfrak{I}_{x_i}^{\pi_i})$ satisfies conditions (i)--(iv) in Def. \ref{samplestab_costo} with $\rho_{i-1}$ and $\rho_{i}$ replacing $R$ and $r$, respectively.
As a consequence, by Def. \ref{samplestab_costo}, (i) and the monotonicity of $\Gamma$ we obtain
\bel{2est}
\d(y(s))\le \Gamma(\rho_{i-1}) \qquad \text{for any $s\ge \Ss_{i-1}$.}
\eeq
for any $i\geq1$.
Now we set $S_y:=\ds\lim_{i\to+\infty} \, \Ss_i$ and we associate with $(\alpha,y)$ its corresponding cost ${\mathfrak I}$, as in \eqref{Pgen}.
By \eqref{Gamma_decrescente} together with \eqref{2est} we deduce that 
\[
\lim_{s\to S_y^-} \d(y(s))=0,
\]
which in turn implies that, on the one hand,  $(\alpha,y,{\mathfrak I})$ is an admissible triple, and, on the other hand,  the total attractiveness property (iii) in Def. \ref{GAC_costo} holds true. Furthermore, \eqref{2est} and \eqref{r0} yield the overshoot boundedness property (i) in Def. \ref{GAC_costo}.  Actually, in view of the properties of $\Gamma$, $\varphi$, and $d_{U_-}$, one has 
\[
\d(y(s))\leq \Gamma(\rho_{0})\leq  
 \Gamma(\zeta(R))
=:{\bf \Gamma}(R) \qquad \forall s\geq0, \qquad
\ds\lim_{R\to0} {\bf \Gamma}(R)=0.
\]
By the very definition of $(\rho_i)_i$ and by \eqref{ir} one gets $\rho_{\i(r)}\leq \Gamma^{-1}(r)$, so that, again by \eqref{2est}, it holds
\[
\d(y(s))\le \Gamma(\rho_{\i(r)})\le r \qquad \text{for any $s\ge \Ss_{\i(r) \vee0}$.} 
\]
This yields the uniform attractiveness property (ii) in Def. \ref{GAC_costo}. 
 Indeed, recalling that $\Ss(y_{x_i}^{\pi_i},\rho_{i})\leq {\bf T}(\rho_{i-1},\rho_{i})$, $\i(R)\le 0$,  $\i(R)<\i(r)$ and the monotonicity properties of ${\bf T}$ one obtains
\[
\Ss_{\i(r)\vee0} =\sum_{i=1}^{\i(r) \vee0}[\Ss_i-\Ss_{i-1}] 
\leq \sum_{i=\i(R)+1}^{\i(r) \vee0} {\bf T}(\rho_{i-1},\rho_{i}) \leq \big(\i(r)-\i(R)\big) {\bf T}(\rho_{\i(R)}, \rho_{\i(r)})=: {\bf S}(R,r).
\]
We do observe that the indexes $\i(R)$ and $\i(r)$ depend on $x$. Conversely, the difference $\i(r)-\i(R)$ as well as $\rho_{\i(R)}$ and $\rho_{\i(r)}$ are independent of $x$, but depend only on $R$, $r$ and the given bilateral sequence $(u_i)_{i\in\Z}$. Therefore, ${\bf S}$ depends only on $R$ and $r$.

\vsm

\noindent
{\it Step 3}.
In order to conclude, it remains  to prove the uniform cost boundedness property (iv) in Def. \ref{GAC_costo}.
By construction, we have $U(x_i) =\tilde u_i= u_{\bar j(x)+i-1}$ for any $i\geq2$ 
so that, by \eqref{ppsi}, \eqref{Phi}, and the properties of the function $\Lambda$ we get
\[
\begin{split}
\int_0^{S_y}&l(y(s),\alpha(s))\,ds  = \sum_{i=1}^{+\infty} \int_{\Ss_{i-1}}^{\Ss_i} l(y^{\pi_i}_{x_i}(s-\Ss_{i-1}),\alpha^{\pi_i}_{x_i}(s-\Ss_{i-1}))\,ds \\
&=\int_0^{\Ss(y_{x_1}^{\pi_1},\rho_{1})} l(y^{\pi_1}_{x_1}(s),\alpha^{\pi_1}_{x_1}(s)) ds +\sum_{i=2}^{+\infty} \int_0^{\Ss(y_{x_i}^{\pi_i},\rho_{i})} l(y^{\pi_i}_{x_i}(s), \alpha^{\pi_i}_{x_i}(s)) ds \\
&\le  \Lambda(\rho_{0}) \Psi(U(x),u_{\bar\j(x)+1})+
 \sum_{i=2}^{+\infty} \Lambda(\rho_{i-1})\Psi(u_{\bar\j(x)+i-1},u_{\bar\j(x)+i}) \\
&\le \Lambda(\rho_0){\bf\Phi}(U(x))=\Lambda(\varphi^{-1}(U(x)){\bf\Phi}(U(x))={\bf W}(x).
\end{split}
\]
We point out that the fact that ${\bf W}$ is continuous, proper and positive definite follows from the properties of $\Lambda$, $\varphi$, $U$ and $\Psi$ (hence, ${\bf\Phi}$).
Statement (ii) of Thm. \ref{sample-->gaccosto} is thus proved.  
\vsm
Now we prove statement (i). 
Let us take an arbitrary strictly decreasing bilateral  sequence $(\tilde\rho_i)_{i\in\Z}$ with $\ds\lim_{i\to-\infty}\tilde\rho_i=+\infty$ and  $\ds\lim_{i\to+\infty}\tilde\rho_i=0$.
Given $0<r<R$ and $x\in\R^n\setminus\T$ with $\d(x)\le R$, let $\bar\j(x)$ be the integer such that $\d(x)\in]\tilde\rho_{\bar\j(x)+1},\tilde\rho_{\bar\j(x)}]$. Set 
$$
\rho_0:=\d(x), \qquad \rho_i:=\tilde\rho_{\bar\j(x)+i} \ \ \text{for any  $i\ge 1$,} \qquad  \rho_{i}:= \tilde\rho_{\bar\j(x)+i+1} \ \ \text{for any $i\le -1$.}
$$
Hence, define $\i(R)$ as the integer such that $R\in]\rho_{\i(R)+1}, \rho_{\i(R)}]$ and $\i(r)$ as the integer such that 
$\Gamma^{-1}(r)\in]\rho_{\i(r)},\rho_{\i(r)-1}]$ and  set $\zeta(R):=\rho_{\i(R)}$ (so that, clearly,  $\rho_0\leq\zeta(R)$). From now on  the proof proceeds exactly as in the {\em Step 2} above. 
\end{proof}

\begin{remark} \label{rem_k=1}
When the degree $k$  is equal to 1, a degree-$1$ feedback generator $x\mapsto \V(x)\in\F^{(1)}$ coincides with an usual feedback function $x\mapsto \alpha(x)\in A$ and any oriented control is constant in the whole time interval of definition. Furthermore, any $\del$-scaled $\V$-sampling process $(x, \pi, \alpha_x^\pi, y_x^\pi)$ is nothing but a standard $\pi$-sampling process associated with a partition $\pi$ of sampling time smaller than $\delta$ (see  Rem. \ref{multirank}). In this case, no control-linear structure of the dynamics function is needed in order to establish the above result. In particular, for $k=1$   the proof of Thm. \ref{sample-->gaccosto} can be easily adapted to general control systems of the form 
\bel{gen_control_sys}
\dot y(s)= F(y(s),\alpha(s)) , \qquad \alpha(s)\in A ,
\eeq
where $A\subset \R^m$ is a nonempty compact subset and $F:\R^n\times A \to \R^n$ is continuous in both variables and locally Lipschitz continuous in the state variable, uniformly with respect to the control variable.
\end{remark}



 \section{Comparing different notions of sample stabilizability}\label{sec_comparison}
 
The known notions of sample stabilizability, both in the first order case (see \cite[Def. I.3]{CLSS}) and in the higher order case (see \cite[Def. 2.18]{Fu}),   look rather simpler than the notion of degree-$k$ $U$-sample stabilizability proposed in Def. \ref{samplestab_costo}. Nevertheless, we show in this section that, in the absence of a cost, these definitions are in fact equivalent. 

\vsm
 For the reader's convenience let us recall the definition proposed in \cite{Fu} that, for $k=1$, coincides with the classical definition in \cite{CLSS}, as observed in \cite[Rem. 2.20]{Fu}. To this aim, given $\delta>0$, we say that a partition $\pi=\{s_j\}_j$ of $\R_{\ge0}$  is a {\em degree-$k$ partition of rank $\delta$} if, for any integer $j\geq1$, it holds 
\bel{RRR}
\frac{k-1}{2k}\cdot\frac{\delta}{\sqrt[k]{j}}\le s_j-s_{j-1}\le\delta. 
\eeq

\begin{definition}[{\cite[Def. 2.18]{Fu}}]\label{sample_stab}
Let $\V:\R^n\setminus\T\to\F^{(k)}$ be a degree-$k$ feedback generator. We say that $\V$  \textit{degree-$k$ sample stabilizes    control system \eqref{control_sys}  to $\T$}  if there exists a map $\delta:\R_{>0}^2\to \R_{>0}$  such that, for any $0<r<R$, for any degree-$k$ partition $\pi = \{ s_j \}_j$ of $\R_{\ge0}$ of rank $\delta(R,r)$ and for any $x \in \R^n\setminus\T$ with ${\bf d}(x) \leq R$, any $\V$-sampling process $(x,\pi,\alpha_x^{\pi},y_x^{\pi})$ is admissible and the following conditions hold:
	\begin{itemize}	
	\item[(i)] ${\bf d}(y_x^{\pi}(s)) \leq \Gamma(R)$ for all $s \geq 0,$$\qquad\qquad\qquad\,\,\,\,\,${\em (Overshoot boundedness)}
	\vsm
	\item[(ii)] ${\bf d}(y_x^{\pi}(s)) \leq r$ for all $s \geq {\bf S}(R,r),$$\quad\qquad\qquad\,\,\,$ {\em(Uniform attractiveness)}
		\end{itemize}
		where $\Gamma:\R_{\ge0}\to\R_{\ge0}$  is a continuous, strictly increasing and unbounded    function  with $\Gamma(0)=0$,  and $\mathbf{S}:\R_{>0}^2\to \R_{>0}$ is a function increasing in the first variable and decreasing in the second one.
\end{definition}
 
\begin{remark}
    The notion of stabilizability given in  Def. \ref{sample_stab} looks quite simpler than the one introduced in  Def. \ref{samplestab_costo}.  In the first instance, the uniform attractiveness in Def. \ref{samplestab_costo} posits  the existence of a  map $U$ which is not needed in Def. \ref{sample_stab}. Furthermore, the  partitions $\pi$ in Def. \ref{samplestab_costo} are associated with   multiranks $\del=(\delta_1,\ldots,\delta_k)$  verifying \eqref{vconsistent_part}, while in  Def. \ref{sample_stab} a single   rank (satisfying \eqref{RRR}) is involved.

Instead, in view of  Thm.  \ref{sample-->gaccosto} the two notions of  degree-$k$ stabilizability provided by   Def. \ref{samplestab_costo}  and Def. \ref{sample_stab}    turn out to be equivalent. Actually, they are both equivalent to the classical concept of sample stabilizability  (see \cite{CLSS}), as stated in Theorem \ref{L_equiv} below.\end{remark}

\begin{theorem}\label{L_equiv} Assume  {\bf (H1)}-{\bf (H2)}. Then, the following conditions are equivalent:
\begin{itemize}
\item[{\rm (i)}]   control system  \eqref{control_sys} is degree-$k$ $U$-sample stabilizable to $\T$ for some $k\ge 1$ and some function $U$, in the sense of 
Def. \ref{samplestab_costo};
\vsm
\item[{\rm (ii)}]  control system  \eqref{control_sys} is degree-$k$ sample stabilizable to $\T$ for some $k\ge 1$, in the sense of  Def. \ref{sample_stab};
\vsm
\item[{\rm (iii)}]   control system  \eqref{control_sys} is sample stabilizable to $\T$ (i.e.,  degree-$1$ sample stabilizable to $\T$, in the sense of  Def. \ref{sample_stab}).
\end{itemize}
\end{theorem}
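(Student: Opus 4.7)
The plan is to establish the equivalence via global asymptotic controllability (GAC, Def.~\ref{GAC_costo}) as a common bridge: (a) (iii) $\Rightarrow$ (ii) is trivial; (b) (ii) $\Rightarrow$ GAC is handled by the same concatenation argument as Thm.~\ref{sample-->gaccosto}(i); (c) GAC $\Rightarrow$ (iii) is classical \cite{CLSS}; and (d) (iii) $\Rightarrow$ (i) is proved by an explicit construction of $U$. Combined with Thm.~\ref{sample-->gaccosto}(i), these four steps yield the three-way equivalence.

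For (a), when $k=1$ we have $\Delta(1)=0$, so the rank condition \eqref{RRR} reduces to $s_j-s_{j-1}\le\delta$; a classical feedback $\alpha:\R^n\setminus\T\to A$ encodes the degree-$1$ feedback generator $\V(x):=(X_1,(f_{i(x)}),\sgn_x)$ with $\alpha(x)=\sgn_x e_{i(x)}$, whose oriented controls are piecewise constant by Def.~\ref{orcon}(i), so that $\V$-sampling trajectories coincide with the classical $\pi$-sampling trajectories of~\cite{CLSS}. For (b), I would simply replay Step~2 of the proof of Thm.~\ref{sample-->gaccosto}: given $x$ with $\d(x)\le R$ and any strictly decreasing bilateral sequence $(\rho_i)_{i\in\Z}$ with $\rho_0:=\d(x)$, $\rho_i\to 0$, $\rho_{-i}\to+\infty$, I would concatenate rank-$\delta(\rho_{i-1},\rho_i)$ $\V$-sampling trajectories over successive time intervals of length ${\bf S}(\rho_{i-1},\rho_i)$ given by Def.~\ref{sample_stab}, and read off overshoot, uniform attractiveness and total attractiveness for Def.~\ref{GAC_costo}. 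Step (c) is the Clarke--Ledyaev--Sontag--Subbotin theorem.

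For (d), let $\alpha,\delta_{\rm cl},\Gamma_{\rm cl},{\bf S}_{\rm cl}$ be the classical data, $\V$ the degree-$1$ generator from (a), and $M$ a uniform local bound for the $|f_i|$ on a fixed tubular neighborhood of $\T$ (finite by (H2)). Set
\[
U(x):=\Gamma_{\rm cl}(\d(x)),\quad \varphi(r):=\Gamma_{\rm cl}\bigl(\tfrac12\Gamma_{\rm cl}^{-1}(r)\bigr),\quad {\bf T}(R,r):={\bf S}_{\rm cl}\bigl(R,\tfrac12\Gamma_{\rm cl}^{-1}(r)\bigr),
\]
\[
\del(R,r):=\min\Bigl\{\delta_{\rm cl}\bigl(R,\tfrac12\Gamma_{\rm cl}^{-1}(r)\bigr),\;\delta_{\rm cl}\bigl(\Gamma_{\rm cl}^{-1}(r),\tfrac12\Gamma_{\rm cl}^{-1}(r)\bigr),\;\tfrac{1}{2M}\Gamma_{\rm cl}^{-1}(r)\Bigr\}.
\]
Continuity, positive-definiteness and properness of $U$ follow from those of $\Gamma_{\rm cl}$ and $\d$ together with (H1); the overshoot (i) and $U$-uniform attractiveness (ii) of Def.~\ref{samplestab_costo} are immediate translations of their classical counterparts (with second argument $\tfrac12\Gamma_{\rm cl}^{-1}(r)$).

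The main obstacle is the monotonicity condition (iii) of Def.~\ref{samplestab_costo}, because classical sample stabilization yields no Lyapunov-type decay along trajectories and the $\V$-feedback is re-evaluated only at partition points, not at the arbitrary time $\tau$ where $U$ first reaches $\varphi(r)$. The argument reads: $U(y_x^\pi(\tau))\le\varphi(r)$ gives $\d(y_x^\pi(\tau))\le\tfrac12\Gamma_{\rm cl}^{-1}(r)$; on the residual interval $[\tau, s_{j_0+1})$ (where $\tau\in[s_{j_0},s_{j_0+1})$) the third term in the minimum defining $\del$ bounds the drift so that $\d(y_x^\pi(s))\le\Gamma_{\rm cl}^{-1}(r)\le r$; and at the next partition point a genuine classical sample trajectory restarts from a point with $\d\le\Gamma_{\rm cl}^{-1}(r)$ under a shifted partition still of admissible rank, so classical overshoot yields $\d(y_x^\pi(s))\le\Gamma_{\rm cl}(\Gamma_{\rm cl}^{-1}(r))=r$ for every $s\ge s_{j_0+1}$. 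This completes the cycle (i)$\Rightarrow$GAC$\Rightarrow$(iii)$\Rightarrow$(i) and (ii)$\Rightarrow$GAC$\Rightarrow$(iii)$\Rightarrow$(ii).
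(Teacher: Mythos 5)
Your overall skeleton---using global asymptotic controllability (Def.~\ref{GAC_costo}) as the bridge---matches the paper's, and your logic does close the equivalence; but the return implication (iii)$\Rightarrow$(i) is handled by a genuinely different argument. The paper never constructs $U$ by hand: it goes (iii)$\Rightarrow$GAC$\Rightarrow$ existence of a locally Lipschitz CLF (Kellett--Teel) $\Rightarrow$ existence of a locally \emph{semiconcave} CLF $U$ (Lai--Motta, Rifford) $\Rightarrow$ degree-$1$ $U$-sample stabilizability via \cite[Thm.~1]{FMR2}; similarly it disposes of (ii)$\Rightarrow$GAC by citing \cite[Prop.~4.2]{Fu} rather than re-running the concatenation of Step~2 of Thm.~\ref{sample-->gaccosto}. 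Your direct construction $U:=\Gamma_{\rm cl}\circ{\bf d}$, $\varphi(r):=\Gamma_{\rm cl}\bigl(\tfrac12\Gamma_{\rm cl}^{-1}(r)\bigr)$ is legitimate precisely because Def.~\ref{samplestab_costo} uses $U$ only through sublevel-set conditions (ii)--(iii) and imposes no decrease property, and your treatment of condition (iii) (drift bound on the residual sampling interval via the third term in the minimum defining $\del$, then classical overshoot restarted from the next node with the shifted partition, using $\Gamma_{\rm cl}^{-1}(r)\le r$) is the right way to handle the fact that $\tau$ need not be a partition point. What your route buys is self-containedness and an explicit, elementary $U$; what the paper's route buys is a $U$ that is an honest semiconcave Control Lyapunov Function, which is what the larger program (the chain of implications toward the inverse Lyapunov theorem) actually needs.

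Two caveats you should tighten. First, the constant $M$ cannot be taken on a single ``fixed tubular neighborhood'' of $\T$ uniformly in $r$: under {\bf (H1)}--{\bf (H2)} the $f_i$ are only bounded on bounded sets, so take $M=M(R,r)$ bounding $|f_i|$ on the (bounded, by compactness of $\partial\T$) set $\{x\in\overline{\R^n\setminus\T}:\ {\bf d}(x)\le\Gamma_{\rm cl}(R)\}$; since $\del$ already depends on $(R,r)$ this is harmless, but as written the claim is false for large $r$. Second, citing \cite{CLSS} alone for GAC$\Rightarrow$(iii) is too quick here: the target is a general closed set with compact boundary, not the origin, and the paper deliberately routes this step through \cite[Thm.~3.2]{KT04} and \cite[Thms.~3.1 and 4.9]{LM} to cover that generality. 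Neither point is a gap in the idea, but both need to be stated correctly for the proof to stand.
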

\begin{proof}  (i)$ \ \Rightarrow \ $(iii). From Thm. \ref{sample-->gaccosto}, 
 degree-$k$  $U$-sample stabilizability of   control system  \eqref{control_sys} to $\T$ for some $k\ge1$ and some function $U$,  implies  GAC to $\T$. Under assumptions {\bf (H1)}-{\bf (H2)},  from 
 an  inverse Lyapunov theorem proved in \cite[Thm. 3.2]{KT04}, GAC to $\T$ is equivalent to the existence of a CLF $\tilde U$ which is locally Lipschitz continuous on $\R^n$. In view of \cite[Thm. 4.9]{LM}, this leads to the existence of a CLF   that is locally semiconcave on $\R^n\setminus\T$, so that  \cite[Thm. 3.1]{LM} finally implies  that  \eqref{control_sys} is sample stabilizable to $\T$ in the classical sense (see also  \cite{CLSS,CLRS,R1,R2}).  
 \vsmm
 \noindent  (iii)$ \ \Rightarrow \ $(i).   If  control system  \eqref{control_sys} is sample stabilizable to $\T$, then it is GAC to $\T$ by classical results, so that, as above, then there exists a CLF $U$ which is locally semiconcave in $\R^n\setminus\T$. Hence, from \cite[Thm. 1]{FMR2} it follows that the system is 
degree-$k$ $U$-sample stabilizable to $\T$ for $k=1$.
\vsmm
\noindent  (ii)$ \ \Leftrightarrow \ $(iii).
If  control system \eqref{control_sys} is degree-$k$ sample stabilizable to $\T$ for some $k\ge1$ in the sense of Def. \ref{sample_stab},  then it is also GAC to $\T$, by \cite[Prop. 4.2]{Fu}.  Once again, this implies that system  \eqref{control_sys} is sample stabilizable to $\T$. The converse implication is trivial since sample   stabilizability to $\T$ is in fact equivalent to 
degree-$k$ sample stabilizability to $\T$ for $k=1$, as defined in Def. \ref{sample_stab} (see \cite[Rem. 2.20]{Fu}).
\end{proof}

\end{document}